\numberwithin{equation}{section}
\newtheorem{theorem}{Theorem}
\newtheorem{lemma}{Lemma}[section]
\newtheorem{corollary}{Corollary}[section]
\theoremstyle{definition}
\theoremstyle{remark}
\newtheorem{remark}{Remark}[section]
\renewcommand{\emptyset}{\varnothing}
\renewcommand{\epsilon}{\varepsilon}
\renewcommand{\phi}{\varphi}
\renewcommand{\kappa}{\varkappa}
\newlength{\halfbls}\setlength{\halfbls}{.5\baselineskip}
\begin{document}
\title[Explicit Ratner's estimates]{Some quantitative versions of Ratner's mixing estimates}
\author{Carlos Matheus}
\address{Carlos Matheus: Universit\'e Paris 13, Sorbonne Paris Cit\'e, LAGA, CNRS (UMR 7539), F-93430, Villetaneuse, France.}
\email{matheus@impa.br.}

\date{December 4, 2013}

\begin{abstract} We give explicit versions for some of Ratner's estimates on the decay of matrix coefficients of $SL(2,\mathbb{R})$-representations.
\end{abstract}
\subjclass[2000]{Primary: 37D40 (Dynamical systems of geometric origin and hyperbolicity); Secondary: 37A25 (Ergodicity, mixing, rates of mixing), 06B15 (Representation theory)}
\keywords{Geodesic flows, hyperbolic surfaces, Ratner's estimates of rate of mixing, quantitative versions of Ratner's mixing estimates.}
\maketitle
\vspace*{-0.5cm}

\setcounter{tocdepth}{2}
\tableofcontents


\section{Introduction}\label{s.introduction}

The study of topological and ergodic features of geodesic and horocycle flows is a classical subject in Dynamical Systems with applications in other fields of Mathematics. For example, the topological features of horocycle flows were used by G. Margulis \cite{Margulis} to establish the Oppenheim conjecture in Number Theory, and, more recently, the ergodic properties (namely, exponential mixing) of geodesic flows on hyperbolic manifolds were successfully applied by J. Kahn and V. Markovic in their work \cite{Kahn:Markovic} on essential immersed hyperbolic surfaces inside closed hyperbolic $3$-manifolds.  

On the other hand, given the nature of the usual topological and ergodic-theoretical results, it is not surprising that most applications of geodesic and horocycle flows to other areas are \emph{qualitative} in the sense that some asymptotic behavior is assured but no rate of convergence is provided. Of course, while qualitative information normally suffices in most applications, sometimes this is not the case in certain fields (such as Number Theory). Hence, it is not rare that \emph{quantitative} versions of qualitative dynamical results are necessary. In particular, this provides part of the motivation behind certain quantitative versions of equidistribution results such as the recent theorem of M. Einsiedler, G. Margulis and A. Venkatesh \cite{EMV}.

In this note, we will discuss some quantitative versions of M. Ratner's estimates of the rate of mixing of geodesic flows \cite{Rt}. In fact, it was known among experts that all quantities in M. Ratner's article \cite{Rt} could be rendered explicit. Thus, in some sense, her original paper was already providing quantitative information about geodesic flows. In particular, we do \emph{not} claim originality in the present note. On the other hand, the author is not aware of accessible references in the literature where explicit versions of Ratner's estimates are discussed. Hence, he believes that this note might be helpful in certain applications of Ratner's mixing estimates. Indeed, this note was originally written as part of a paper by G. Schmith\"usen and the author \cite{Matheus:Schmithuesen} where quantitative versions of Ratner's results were used to exhibit explicit rational points in the moduli spaces of Abelian differentials generating Teichm\"uller curves with complementary series. Ultimately the quantitative Ratner estimates were replaced by applications of Cheeger-Buser inequalities in the \cite{Matheus:Schmithuesen} paper, so the present author made the note about the quantitative Ratner estimates publicly available on his weblog \cite{DM}. A year later, the author was contacted by Han Li who communicated that the discussion in the informal notes \cite{DM} were also naturally related to a forthcoming paper \cite{Li:Margulis} by Han Li and Gregory Margulis (where they study the $3$-dimensional Markov spectrum and they largely improve a recent result of A. Mohammadi \cite{Mohammadi}). For these reasons, in order to make these estimates more accessible for future work of others, the author has formalized the results on quantitative Ratner estimates in this note.


Let us now briefly describe the organization of this note. In the next section, we recall some elementary aspects of the representation theory of $SL(2,\mathbb{R})$, and we state quantitative versions of some results in \cite{Rt}, cf. Theorems \ref{t.theorem1} and \ref{t.theorem3} (and also Corollary \ref{c.ratner} below). Then, in the two subsequent sections, we follow closely the arguments in \cite{Rt} to render all implied constants in Lemma 2.2 in Ratner's article \cite{Rt} as explicitly as possible, and, in particular, we will summarize our conclusions in Lemma \ref{l.lemma2-2} below. Finally, in the last section, we apply Lemma \ref{l.lemma2-2} and Ratner's arguments in \cite{Rt} to complete the proof of Theorems \ref{t.theorem1} and \ref{t.theorem3}.

\subsection*{Acknowledgments} The author is thankful to the anonymous referee, Idris Assani and Kimberly Presser for their immense help in improving previous versions of this note. The author was partially supported by the Balzan project of Jacob Palis and by the French ANR grant ``GeoDyM'' (ANR-11-BS01-0004).

\section{Preliminaries and main statements}\label{s.preliminaries}

In this section, we briefly review some basic facts about the representation theory of $SL(2,\mathbb{R})$. The reader may consult A. Knapp's book \cite{Knapp} for the proofs of the results mentioned below.

Let $T: SL(2,\mathbb{R})\to U(\mathcal{H})$ be an unitary representation of $SL(2,\mathbb{R})$, i.e., $T$ is a homomorphism from $SL(2,\mathbb{R})$ into the group $U(\mathcal{H})$ of unitary transformations of the complex separable Hilbert space $\mathcal{H}=\mathcal{H}(T)$. We say that a vector $v\in\mathcal{H}$ is a $C^k$-vector of $T$ if $g\mapsto T(g)v$ is $C^k$. Recall that the subset of $C^{\infty}$-vectors is dense in $\mathcal{H}$. 

The Lie algebra $sl(2,\mathbb{R})$ of $SL(2,\mathbb{R})$ (i.e., the tangent space of $SL(2,\mathbb{R})$ at the identity element) is the set of all $2\times2$ matrices with zero trace. Given a $C^1$-vector $v$ of $T$ and $X\in sl(2,\mathbb{R})$, the Lie derivative $L_X v$ is
$$L_X v := \lim\limits_{t\rightarrow0}\frac{T(\exp(tX))\cdot v - v}{t}$$
where $\exp(X)$ is the exponential map (of matrices).

An important basis of $sl(2,\mathbb{R})$ is
$$W:=\left(\begin{array}{cc}0&1\\-1&0\end{array}\right), \quad Q:=\left(\begin{array}{cc}1 & 0 \\ 0&-1\end{array}\right), \quad V:=\left(\begin{array}{cc}0&1\\1&0\end{array}\right)$$
This basis has the property that 
$$\exp(tW)=\left(\begin{array}{cc}\cos t&\sin t\\-\sin t&\cos t\end{array}\right):=r(t),$$ 
$$\exp(tQ) = \left(\begin{array}{cc}e^t & 0\\ 0 &e^{-t}\end{array}\right):=a(t)$$ and 
$$\exp(tV) = \left(\begin{array}{cc}\cosh t&\sinh t\\-\sinh t&\cosh t\end{array}\right),$$ and, furthermore, $[Q,W]=2V$, $[Q,V]=2W$ and $[W,V]=2Q$ where $[.,.]$ is the Lie bracket of $sl(2,\mathbb{R})$ (i.e., $[A,B]:= AB-BA$ is the commutator). 

The Casimir operator $\Omega_{T}$ is $\Omega_{T}:=(L_V^2+L_Q^2-L_W^2)/4$ on the dense subspace of $C^2$-vectors of $T$. It is known that $\langle \Omega_{T}v,w\rangle = \langle v,\Omega_{T}w\rangle$ for any $C^2$-vectors $v,w\in\mathcal{H}$, the closure of $\Omega_{T}$ is self-adjoint, $\Omega_{T}$ commutes with $L_X$ on $C^3$-vectors for any $X\in sl(2,\mathbb{R})$ and $\Omega_{T}$ commutes with $T(g)$ for any $g\in SL(2,\mathbb{R})$.

Furthermore, when the representation $T$ is irreducible, $\Omega_{T}$ is a scalar multiple of the identity operator, i.e., $\Omega_{T}v = \lambda(T)v$ for some $\lambda(T)\in\mathbb{R}$ and for any $C^2$-vector $v\in\mathcal{H}$ of $T$.

Also, given $p\geq 0$, we will denote by $K(T,p)$ the set of vectors $v\in\mathcal{H}$ such that $\theta\mapsto T(r(\theta))v$ is $C^p$.

Finally, for later use, we need to introduce the following explicit constants and functions. First, we define
$$C_1:=(1-e^{-4})^{-1}, \quad C_2:=\frac{2}{1-e^{-4}}\left(1+\frac{2}{e^2(1-e^{-4})}+\frac{2}{e^4(1-e^{-4})}\right).$$
Secondly, using these constants we can define the following functions of the parameter $\lambda\in\mathbb{R}$:
$$\bar{K}_\lambda=\left\{\begin{array}{cl} 4C_1/9e^3+2C_2/e+e & \textrm{if }\lambda\leq-1/4\\ 
4C_1/9e^3+2C_2/e+e & \textrm{if }-1/4<\lambda<0\\ 
(C_1+C_2)/2 & \textrm{if } 0\leq\lambda\end{array}\right.,$$
$$\tilde{K}_{\lambda}=\left\{\begin{array}{cl} (1+2\sqrt{2})e + (32+\sqrt{2})C_1^2/3e^3  & \textrm{if }\lambda\leq-1/4\\ 
3e + e^2 + 4C_1/9e^3 & \textrm{if }-1/4<\lambda<0\\ 
e^2& \textrm{if } 0\leq\lambda\end{array}\right..$$
Then, we consider the following auxiliary function of the parameters $\lambda, t\in\mathbb{R}$:
$$b_{\lambda}(t)=\left\{\begin{array}{cc} te^{-t}, & \textrm{if } \lambda\leq-1/4 \\
te^{(-1+\sqrt{1+4\lambda})t}, & \textrm{if } -1/4<\lambda<0 \\ 
te^{-2t}, & \textrm{if } 0\leq\lambda \end{array}\right.$$

Once we dispose of these notations, we are ready to state quantitative versions of some theorems in M. Ratner's paper \cite{Rt}. We start with the following two theorems providing explicit analogues to Theorems 1 and 3 (resp.) in \cite{Rt}.

\begin{theorem}\label{t.theorem1}Let $T$ be a non-trivial irreducible unitary representation of $SL(2,\mathbb{R})$ in $\mathcal{H}(T)$ and let $\lambda=\lambda(T)$. Let $v,w\in K(T,3)$ and $B(t)=\langle v,w\circ a(t)\rangle$. Then, for all $t\geq 1$, 
\begin{eqnarray*}
|B(t)|&\leq&\sqrt{2\zeta(2)}\cdot\bar{K}_{\lambda}\cdot\|L_W^3v\|\cdot(\|w\|+\sqrt{2\zeta(6)}\|L_W^3w\|)\cdot b_{\lambda}(t) \\
&+&\sqrt{2\zeta(2)}\cdot\bar{K}_{\lambda}\cdot(\|v\|+\sqrt{2\zeta(6)}\|L_W^3v\|)\cdot\|L_W^3w\|\cdot b_{\lambda}(t)\\
&+&\tilde{K}_{\lambda}\cdot(\|v\|+\sqrt{2\zeta(6)}\|L_W^3v\|)\cdot(\|w\|+\sqrt{2\zeta(6)}\|L_W^3w\|)\cdot b_{\lambda}(t)
\end{eqnarray*}
\end{theorem}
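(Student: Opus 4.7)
The natural approach is Ratner's own: Fourier-decompose $v$ and $w$ under the compact subgroup $SO(2) = \{r(\theta)\}$ generated by $W$, apply the one-parameter quantitative estimate (Lemma \ref{l.lemma2-2}) to each Fourier pair, and then sum. Concretely, since $v, w \in K(T,3)$, we can write $v = \sum_{m \in \mathbb{Z}} v_m$ and $w = \sum_{n \in \mathbb{Z}} w_n$ with $T(r(\theta))v_m = e^{im\theta} v_m$, and Parseval together with $L_W v_m = im v_m$ gives $\sum_m m^{2k} \|v_m\|^2 = \|L_W^k v\|^2$ for $k=0,1,2,3$. Cauchy--Schwarz then yields the two key tail inequalities
\begin{equation*}
\sum_{m \neq 0} \|v_m\| \le \sqrt{2\zeta(6)}\,\|L_W^3 v\|, \qquad \sum_m \|v_m\| \le \|v\| + \sqrt{2\zeta(6)}\,\|L_W^3 v\|,
\end{equation*}
which account for the factors $\sqrt{2\zeta(6)}$ appearing in each term of the theorem.

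Next, I would expand
\begin{equation*}
B(t) = \langle v, w\circ a(t)\rangle = \sum_{m,n} \langle v_m, w_n \circ a(t)\rangle
\end{equation*}
and substitute the bounds from Lemma \ref{l.lemma2-2}. The structure of the three terms on the right hand side of the theorem is exactly what one expects from the three regimes in that lemma: (i) when both $m$ and $n$ are nonzero, one obtains a bound proportional to $\bar K_\lambda \|v_m\|\|w_n\| b_\lambda(t)/|mn|^{\epsilon}$ or a similar factor forcing additional decay in the Fourier indices; (ii) when exactly one of $m,n$ is zero, a bound proportional to $\bar K_\lambda \|v_m\|\|w_n\| b_\lambda(t)/|\text{nonzero index}|$, whose summation over the nonzero index produces a factor $\sqrt{2\zeta(2)}$ by Cauchy--Schwarz against $\sum m^{-2} = 2\zeta(2)$ and then Parseval with $L_W^3$; and (iii) when $m = n = 0$, a bound proportional to $\tilde K_\lambda \|v_0\|\|w_0\| b_\lambda(t)$, which is absorbed into the third term since $\|v_0\| \le \|v\|$. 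Splitting $B(t)$ accordingly into the three sums and applying these three estimates will produce the first term (from regime (ii) with $v_m$, $m\neq 0$, paired with the full $w$-sum), the second term (symmetric, $w_n$, $n\neq 0$, paired with the full $v$-sum), and the third term (from regime (iii) together with the slack needed to bound regime (i) by the cruder Cauchy--Schwarz product $(\sum_m\|v_m\|)(\sum_n\|w_n\|)$).

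The main obstacle is bookkeeping: one must track how Lemma \ref{l.lemma2-2} distributes the constants $\bar K_\lambda$ and $\tilde K_\lambda$ across the $m=0$ vs. $m\neq 0$ and $n=0$ vs. $n\neq 0$ subdivisions, and verify that the Cauchy--Schwarz step that converts a weighted sum $\sum |c_m|/|m|$ into $\sqrt{2\zeta(2)}\,\|L_W^3 v\|$-type bounds (after using $\sum m^{-2}=2\zeta(2)$ and $\sum m^2 \|v_m\|^2 \le \|L_W^3 v\|^2$ after absorbing additional factors of $m$) delivers exactly the factor $\sqrt{2\zeta(2)}$ and no extra. A subtlety is that the three cases in the definitions of $\bar K_\lambda$, $\tilde K_\lambda$, $b_\lambda$ must be handled uniformly, but since those cases are inherited directly from Lemma \ref{l.lemma2-2}, the trichotomy propagates to $B(t)$ without new work. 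Once these bookkeeping steps are done, the hypothesis $t \ge 1$ enters only through the shape of $b_\lambda(t)$ (which decays for $t \ge 1$ in each regime), and summing the three contributions gives the stated bound.
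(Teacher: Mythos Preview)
Your overall strategy---Fourier-decompose along $SO(2)$, apply Lemma~\ref{l.lemma2-2} termwise, and sum via Cauchy--Schwarz---is exactly what the paper does. But your description of the mechanism is backwards, and as written the bookkeeping would not close.

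Lemma~\ref{l.lemma2-2} does \emph{not} give decay in the Fourier indices: it says
\[
|B_{n,m}(t)|\le\bigl(\bar K_\lambda(m^2+n^2)+\tilde K_\lambda\bigr)\,b_\lambda(t),
\]
i.e.\ the bound \emph{grows} quadratically in $m,n$. There is no ``$/|mn|^\epsilon$'' or ``$/|\text{nonzero index}|$'' factor, and the three terms in the theorem do \emph{not} arise from a case split on whether $m,n$ vanish. They arise simply from the additive split $\bar K_\lambda n^2+\bar K_\lambda m^2+\tilde K_\lambda$ inside the double sum
\[
|B(t)|\le b_\lambda(t)\sum_{n,m}|c_n|\,|d_m|\bigl(\bar K_\lambda n^2+\bar K_\lambda m^2+\tilde K_\lambda\bigr),
\]
which factors as
\[
\bar K_\lambda\Bigl(\sum_n n^2|c_n|\Bigr)\Bigl(\sum_m|d_m|\Bigr)
+\bar K_\lambda\Bigl(\sum_n|c_n|\Bigr)\Bigl(\sum_m m^2|d_m|\Bigr)
+\tilde K_\lambda\Bigl(\sum_n|c_n|\Bigr)\Bigl(\sum_m|d_m|\Bigr).
\]
The factor $\sqrt{2\zeta(2)}$ then comes from bounding $\sum_{n\neq 0} n^2|c_n|$ by Cauchy--Schwarz as $\bigl(\sum n^6|c_n|^2\bigr)^{1/2}\bigl(\sum n^{-2}\bigr)^{1/2}=\sqrt{2\zeta(2)}\,\|L_W^3 v\|$, not from a sum of the form $\sum|c_m|/|m|$. (Note also that $\sum m^2\|v_m\|^2=\|L_W v\|^2$, not $\|L_W^3 v\|^2$; you need $\sum m^6\|v_m\|^2=\|L_W^3 v\|^2$.) Once you reverse the direction of the index dependence in your head, the proof is the short computation above.
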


\begin{theorem}\label{t.theorem3}Let $T$ be an unitary representation of $SL(2,\mathbb{R})$ having no non-zero invariant vectors in $\mathcal{H}(T)$. Denote by $\Lambda=\Lambda(\Omega_T)$ the spectrum of the Casimir operator and
$$A(T)=\Lambda\cap(-1/4,0).$$
If $A(T)\neq\emptyset$, let $\beta(T)=\sup A(T)$ and $\sigma(T)=-1+\sqrt{1+4\beta(T)}$. Assume that $\beta(T)<0$ when $A(T)\neq\emptyset$. Let $B(t)=\langle v,w\circ a(t)\rangle$ with $v,w\in K(T,3)$. Then, for all $t\geq 1$,
\begin{eqnarray*}
|B(t)|&\leq&\sqrt{2\zeta(2)}\cdot\bar{K}\cdot\|L_W^3v\|\cdot(\|w\|+\sqrt{2\zeta(6)}\|L_W^3w\|)\cdot b_T(t) \\
&+&\sqrt{2\zeta(2)}\cdot\bar{K}\cdot(\|v\|+\sqrt{2\zeta(6)}\|L_W^3v\|)\cdot\|L_W^3w\|\cdot b_T(t)\\
&+&\tilde{K}_T\cdot(\|v\|+\sqrt{2\zeta(6)}\|L_W^3v\|)\cdot(\|w\|+\sqrt{2\zeta(6)}\|L_W^3w\|)\cdot b_T(t)
\end{eqnarray*}
where $\bar{K}=\bar{K}_{\beta(T)}$ and $\tilde{K}_T=\tilde{K}_{\beta(T)}$ and $b_T(t)=b_{\beta(T)}(t)$.
\end{theorem}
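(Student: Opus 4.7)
The plan is to reduce Theorem \ref{t.theorem3} to the irreducible case already handled in Theorem \ref{t.theorem1} by means of a direct integral decomposition with respect to the Casimir operator. Since $T$ has no non-zero invariant vectors and $\Omega_T$ commutes with $T(g)$ for all $g\in SL(2,\mathbb{R})$, the standard spectral theory (see Mackey/Mautner) yields a disintegration
\begin{equation*}
T=\int_{\Lambda}^{\oplus}T_{\lambda}\,d\mu(\lambda),\qquad \mathcal{H}(T)=\int_{\Lambda}^{\oplus}\mathcal{H}(T_{\lambda})\,d\mu(\lambda),
\end{equation*}
where almost every $T_{\lambda}$ is irreducible with Casimir parameter $\lambda(T_{\lambda})=\lambda$ and $\mu$ is supported on $\Lambda=\Lambda(\Omega_T)\subset(-\infty,-1/4]\cup(-1/4,0)\cup[0,\infty)$.

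Writing $v=\int^{\oplus}v_{\lambda}\,d\mu(\lambda)$ and $w=\int^{\oplus}w_{\lambda}\,d\mu(\lambda)$, the fact that $v,w\in K(T,3)$ and that $L_W$ acts fiberwise implies $v_{\lambda},w_{\lambda}\in K(T_{\lambda},3)$ for $\mu$-a.e.\ $\lambda$, with $\|L_W^j v\|^2=\int\|L_W^j v_{\lambda}\|^2 d\mu$ and similarly for $w$. Since $a(t)$ also acts fiberwise, the matrix coefficient splits as
\begin{equation*}
B(t)=\int_{\Lambda}\langle v_{\lambda},w_{\lambda}\circ a(t)\rangle\,d\mu(\lambda)=\int_{\Lambda}B_{\lambda}(t)\,d\mu(\lambda).
\end{equation*}
To each $B_{\lambda}(t)$ I apply Theorem \ref{t.theorem1}, obtaining a pointwise $\mu$-a.e.\ bound in terms of $\bar K_{\lambda}$, $\tilde K_{\lambda}$ and $b_{\lambda}(t)$.

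The heart of the argument is then a uniform domination step: I must verify that for every $\lambda\in\Lambda$ one has $b_{\lambda}(t)\le b_{\beta(T)}(t)$, $\bar K_{\lambda}\le \bar K_{\beta(T)}$ and $\tilde K_{\lambda}\le \tilde K_{\beta(T)}$ for $t\ge 1$. The decay comparison is the main content: for $\lambda\le -1/4$ one has $b_{\lambda}(t)=te^{-t}$ and the hypothesis $-1/4<\beta(T)<0$ gives $-1+\sqrt{1+4\beta(T)}>-1$, so $te^{-t}\le te^{(-1+\sqrt{1+4\beta(T)})t}=b_{\beta(T)}(t)$; for $\lambda\in(-1/4,0)\cap\Lambda$, monotonicity of $\lambda\mapsto -1+\sqrt{1+4\lambda}$ together with $\lambda\le\beta(T)$ yields the same inequality; and for $\lambda\ge 0$, $b_{\lambda}(t)=te^{-2t}$ is obviously dominated. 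The constants $\bar K_{\lambda},\tilde K_{\lambda}$ take only three values each, and a direct numerical comparison (using the explicit $C_1,C_2$) shows that the values assigned for $-1/4<\lambda<0$ are the largest, so they bound the others.

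With this uniform domination in hand, I bring the supremum outside the integral and use Cauchy--Schwarz in $\mu$ together with the fiberwise identity $\|\cdot\|^2=\int\|\cdot_{\lambda}\|^2 d\mu$ to reassemble products such as $\int\|L_W^3v_{\lambda}\|(\|w_{\lambda}\|+\sqrt{2\zeta(6)}\|L_W^3w_{\lambda}\|)\,d\mu$ into the global norms that appear on the right-hand side of Theorem \ref{t.theorem3}. The hardest step, conceptually, will be the uniform domination of the constants and exponents by those corresponding to $\beta(T)$; this is precisely where the hypothesis $\beta(T)<0$ is used, since $\beta(T)\to 0$ would make $b_{\beta(T)}(t)$ degenerate to $te^{0}$, destroying the decay. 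Handling the degenerate case $A(T)=\emptyset$ is easier: $\Lambda\subset(-\infty,-1/4]\cup[0,\infty)$ and one simply bounds every fiber by the worst of $te^{-t}$ and $te^{-2t}$, i.e.\ by $te^{-t}$, and absorbs this into the same formulation by interpreting $\bar K,\tilde K_T,b_T$ as the limiting expressions at $\beta=-1/4$.
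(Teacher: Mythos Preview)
Your approach---direct-integral decomposition, fiberwise application of Theorem~\ref{t.theorem1}, uniform domination of $b_\lambda$, $\bar K_\lambda$, $\tilde K_\lambda$ by the values at $\beta(T)$, and Cauchy--Schwarz reassembly into global norms---is precisely what the paper intends when it says the result follows from Theorem~\ref{t.theorem1} together with Ratner's argument on pages 285--286 of \cite{Rt}. One small caveat: disintegrating over the Casimir spectrum $\Lambda(\Omega_T)$ alone does not force the fibers $T_\lambda$ to be irreducible (distinct irreducibles of $SL(2,\mathbb{R})$ can share a Casimir eigenvalue), so to apply Theorem~\ref{t.theorem1} fiberwise you should either decompose over the unitary dual---legitimate since $SL(2,\mathbb{R})$ is Type~I---or simply note that the bound in Theorem~\ref{t.theorem1} depends only on $\lambda$ and on norms, so a further Cauchy--Schwarz over the multiplicity space absorbs any reducibility at no cost.
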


Next, let us recall that Ratner's theorems in \cite{Rt} have nice consequences to the study of rates of mixing of the geodesic flow on hyperbolic surfaces. More precisely, we consider the regular representation of $SL(2,\mathbb{R})$ on $L^2(S)$ where $S=SO(2,\mathbb{R})\backslash SL(2,\mathbb{R})/\Gamma=\mathbb{H}/\Gamma$ is a hyperbolic surface of finite area (i.e., $\Gamma$ is a lattice of $SL(2,\mathbb{R})$). Then, by noticing that the lift to the unit tangent bundle $T^1S=SL(2,\mathbb{R})/\Gamma$ of $S$ of a function $L^2(S)$ is constant along the orbits of $SO(2,\mathbb{R})$, one has that the Lie derivative $L_W$ of such lifts vanish. Therefore, since a number $\lambda\in(-1/4,0)$ belongs to the spectrum of the Casimir operator if and only if it belongs to the spectrum of the hyperbolic Laplacian $\Delta_{\Gamma}$ on $S=\mathbb{H}/\Gamma$, by direct application of Theorem \ref{t.theorem3} above, one gets the following corollary giving a quantitative version of (part of) Theorem 2 in Ratner's paper \cite{Rt}. 

\begin{corollary}\label{c.ratner}Let $\Gamma$ be a lattice of $SL(2,\mathbb{R})$ and let $T=T_{\Gamma}$ be the regular representation of $SL(2,\mathbb{R})$ on $L^2(S)$ where $S=SO(2,\mathbb{R})\backslash SL(2,\mathbb{R})/\Gamma=\mathbb{H}/\Gamma$. Given $v,w\in L^2(S)$ with $\int_S vd\mu=\int_S wd\mu=0$, it holds 
$$|\langle v,T(a(t))w\rangle|:=|\langle v,w\circ a(t)\rangle|\leq\tilde{K}_{\Gamma}\cdot\|v\|_{L^2(S)}\cdot\|w\|_{L^2(S)}\cdot b_{\Gamma}(t)$$
where 
$$\tilde{K}_{\Gamma}=\left\{\begin{array}{cl}\frac{(32+\sqrt{2})C_1^2}{3e^3}+(1+2\sqrt{2})e & \textrm{ if } \lambda_1(\Delta_{\Gamma})\leq-1/4, \\ 
\frac{4C_1}{9e^3}+3e+e^2 & \textrm{ if } -1/4<\lambda_1(\Delta_{\Gamma})<0\end{array}\right.,$$
$$b_{\Gamma}(t)=\left\{\begin{array}{cl}t\cdot e^{-t} & \textrm{ if } \lambda_1(\Delta_{\Gamma})\leq-1/4, \\ 
t\cdot e^{\sigma(\Gamma)t} & \textrm{ if } -1/4<\lambda_1(\Delta_{\Gamma})<0\end{array}\right.,$$
$\Delta_{\Gamma}$ is the hyperbolic Laplacian on $S=\mathbb{H}/\Gamma$, $\lambda_1(\Delta_{\Gamma})$ is its first eigenvalue, $\sigma(\Gamma)=-1+\sqrt{1+4\lambda_1(\Delta_{\Gamma})}$ is the size of the spectral gap (if $\lambda_1(\Delta_{\Gamma})\in(-\frac{1}{4},0)$), and the constants $C_1,C_2>0$ as above.
\end{corollary}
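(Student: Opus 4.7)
The plan is to view this as a direct specialization of Theorem \ref{t.theorem3}. First, I would lift $v$ and $w$ from $L^2(S)$ to $L^2(T^1 S) = L^2(SL(2,\mathbb{R})/\Gamma)$; since $S = SO(2,\mathbb{R})\backslash T^1 S$, these lifts are constant along the $SO(2,\mathbb{R})$-orbits defining the quotient. The mean-zero hypothesis $\int_S v\, d\mu = \int_S w\, d\mu = 0$ places them in the orthogonal complement of the (one-dimensional) subspace of $SL(2,\mathbb{R})$-invariant vectors in the regular representation on $L^2(SL(2,\mathbb{R})/\Gamma)$, so the representation restricted to this complement still contains $v$ and $w$, has no non-zero invariant vectors, and meets the hypothesis of Theorem \ref{t.theorem3}.

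The crucial observation is that $W$ is the infinitesimal generator of $SO(2,\mathbb{R})$, since $\exp(tW) = r(t)$. Consequently, the $SO(2,\mathbb{R})$-invariance of the lifts of $v$ and $w$ forces $L_W v = L_W w = 0$, and \emph{a fortiori} $L_W^3 v = L_W^3 w = 0$. This makes the first two terms on the right-hand side of the inequality in Theorem \ref{t.theorem3} vanish outright, collapsing the estimate to the third term alone, $\tilde{K}_T \cdot \|v\| \cdot \|w\| \cdot b_T(t)$.

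To conclude, I would identify the parameter $\beta(T)$ controlling $\tilde{K}_T$ and $b_T$ with the first eigenvalue $\lambda_1(\Delta_\Gamma)$ of the hyperbolic Laplacian. A standard computation shows that on the subspace of $SO(2,\mathbb{R})$-invariant vectors, the Casimir operator $\Omega_T = (L_V^2 + L_Q^2 - L_W^2)/4$ agrees (up to the usual normalization) with the hyperbolic Laplacian on $S = \mathbb{H}/\Gamma$. Hence a value $\lambda \in (-1/4, 0)$ lies in the spectrum of $\Omega_T$ restricted to this subspace precisely when $\lambda \in \mathrm{Spec}(\Delta_\Gamma)$, so $\beta(T) = \lambda_1(\Delta_\Gamma)$ when the latter lies in $(-1/4, 0)$; when $\lambda_1(\Delta_\Gamma) \leq -1/4$ the set $A(T)$ is empty, corresponding to the first case of the piecewise definitions of $\tilde{K}_\lambda$ and $b_\lambda$ and yielding $b_T(t) = t e^{-t}$. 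Reading the piecewise definitions of $\tilde{K}_\lambda$ and $b_\lambda$ at $\lambda = \lambda_1(\Delta_\Gamma)$, and substituting $\sigma(\Gamma) = -1 + \sqrt{1+4\lambda_1(\Delta_\Gamma)}$ in the complementary-series case, reproduces exactly the two-case formulas for $\tilde{K}_\Gamma$ and $b_\Gamma(t)$ in the statement. The main conceptual step, rather than a real obstacle, will be handling the direct-integral decomposition into irreducibles implicit in Theorem \ref{t.theorem3} when several components with distinct Casimir eigenvalues contribute to $v$ and $w$; this reduces to noting the monotonicity of $\tilde{K}_\lambda$ and $b_\lambda(t)$ in $\lambda$, so that the worst-case value $\beta(T) = \lambda_1(\Delta_\Gamma)$ indeed dominates the full decomposition.
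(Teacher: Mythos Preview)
Your proposal is correct and follows essentially the same route as the paper: lift $v,w$ to $SO(2,\mathbb{R})$-invariant functions on $SL(2,\mathbb{R})/\Gamma$ so that $L_W v=L_W w=0$, identify the relevant part of the Casimir spectrum with the Laplacian spectrum on $\mathbb{H}/\Gamma$, and then apply Theorem~\ref{t.theorem3}, whereupon only the $\tilde{K}_T$-term survives. Your write-up is in fact more detailed than the paper's (which dispatches the corollary in the paragraph preceding its statement); the monotonicity remark you add at the end is already absorbed into the proof of Theorem~\ref{t.theorem3} via Ratner's direct-integral argument, so you need not re-prove it here.
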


\begin{remark} It is worth it to point out that the explicit constants appearing in these quantitative versions of Ratner's estimates are not very large. For instance, since $C_1=(1-e^{-4})^{-1}$, we have that the constant $\tilde{K}_{\Gamma}$ in Corollary \ref{c.ratner} above satisfies 
$$\tilde{K}_{\Gamma}\leq \frac{(32+\sqrt{2})}{3e^3(1-e^{-4})^2}+(1+2\sqrt{2})e < 10.9822$$
\end{remark}

Our main goal is to prove Theorems \ref{t.theorem1} and \ref{t.theorem3}. For this reason, we will spend the next two sections performing several preliminary estimates to derive a quantitative version (namely, Lemma \ref{l.lemma2-2} below) of a key estimate in Ratner's arguments (namely, Lemma 2.2 in \cite{Rt}).

\section{Some preparatory estimates}

Let $T$ be a \emph{non-trivial} irreducible unitary $SL(2,\mathbb{R})$-representation in a complex separable Hilbert space $\mathcal{H} = \mathcal{H}(T)$. We define, for each $n\in\mathbb{Z}$,
$$\mathcal{H}_n(T) = \{v\in\mathcal{H}(T): T(r(\theta))v=e^{in\theta}v \,\forall\,\theta\in\mathbb{R}\}.$$
where $r(\theta) = \left(\begin{array}{cc}\cos \theta & \sin\theta \\ -\sin\theta & \cos\theta\end{array}\right)\in SO(2,\mathbb{R})$, $\theta\in\mathbb{R}$. 
Then, one has $\mathcal{H}(T)=\oplus_{n\in\mathbb{Z}}\mathcal{H}_n(T)$. Furthermore, by irreducibility of $T$, we have that $\textrm{dim}(\mathcal{H}_n(T)) = 0$ or $1$. In this way, one can construct an orthonormal basis $\{\phi_n\in\mathcal{H}_n(T):n\in\mathbb{Z}\}$ of $\mathcal{H}(T)$ such that $\phi_n\neq 0$ if and only if $\textrm{dim}(\mathcal{H}_n(T)) = 1$. 



Denote by $B_{n,m}(t) = \langle \phi_n, T(a(t))\phi_m\rangle$, where $a(t)=\left(\begin{array}{cc}e^t & 0 \\ 0 & e^{-t}\end{array}\right)$ is the (positive) diagonal 1-paramter subgroup of $SL(2,\mathbb{R})$. We will be interested in the decay properties of $B_{n,m}(t)$ as $t\to\infty$. To perform this study, we follow M. Ratner by making a series of preparations.

As it is shown in Lemma 2.1 of Ratner's paper \cite{Rt}, $y(t):=B_{n,m}(t)$ satisfies the following ODE 
$$y'' + 2y'-4\lambda y = f_1(t)+f_2(t)$$
where 
$$f_1(t)=(2e^{2t}\sinh(2t))^{-1}y'(t)$$
and 
$$f_2(t)=y(t)\left[\frac{2m(n-me^{-2t})}{\sinh(2t)}-\frac{(n-me^{-2t})^2}{\sinh^2(2t)}\right].$$ 

Furthermore, by the discussions after equation (2.12) and the equation (2.13) from Ratner's paper \cite{Rt}, one has 
$|y(t)|=|B_{n,m}(t)|\leq 1$ and $|y'(t)|=|B_{n,m}'(t)|\leq\sqrt{m^2-4\lambda}$. Hence,
$$|f_1(t)|\leq |(2e^{2t}\sinh(2t))^{-1}|\cdot\sqrt{m^2-4\lambda}$$
and 
$$|f_2(t)|\leq \left|\frac{2m(n-me^{-2t})}{\sinh(2t)}-\frac{(n-me^{-2t})^2}{\sinh^2(2t)}\right|.$$
Since 
$$|(2e^{2t}\sinh(2t))^{-1}|=e^{-4t}|(1-e^{-4t})^{-1}|\leq e^{-4t}\cdot(1-e^{-4})^{-1}$$ 
for every $t\geq 1$, we obtain that the constant $C_1$ appearing in equation (2.14) of Ratner's paper \cite{Rt} is 
\begin{equation}\label{e.C1-2-14}
C_1=(1-e^{-4})^{-1},
\end{equation}
i.e., 
\begin{equation}\label{e.2-14-1}
|f_1(t)|\leq C_1\sqrt{m^2-4\lambda}\cdot e^{-4t}
\end{equation}
with $C_1$ as above. 

Similarly, $\frac{2m(n-me^{-2t})}{\sinh(2t)}-\frac{(n-me^{-2t})^2}{\sinh^2(2t)}=\frac{1}{\sinh(2t)}[2m(n-me^{-2t})-\frac{(n-me^{-2t})^2}{\sinh(2t)}]$, so that 
$|f_2(t)|\leq\left|\frac{2m(n-me^{-2t})}{\sinh(2t)}-\frac{(n-me^{-2t})^2}{\sinh^2(2t)}\right|$ is bounded by the quantity $\frac{2}{(1-e^{-4})}e^{-2t}[2m(n-me^{-2t})-\frac{(n-me^{-2t})^2}{\sinh(2t)}]$ for every $t\geq1$. On the other hand, this last quantity is bounded by 
$$2C_1e^{-2t}\left[|2mn|+2e^{-2}m^2+2e^{-2}C_1n^2+2e^{-4}C_1|2mn|+2e^{-6}C_1m^2\right].$$
Because $|2mn|\leq m^2+n^2$ and $2e^{-2}+2C_1e^{-6}=2C_1e^{-2}$ (since $C_1=1/(1-e^{-4})$), we see that 
\begin{eqnarray*}
|f_2(t)|&\leq& \frac{2C_1}{e^{2t}}\left[\left(1+\frac{2}{e^2}+\frac{2C_1}{e^4}+\frac{2C_1}{e^6}\right)m^2+\left(1+\frac{2C_1}{e^2}+\frac{2C_1}{e^4}\right)n^2\right]\\
&=&2C_1e^{-2t}(1+2C_1e^{-2}+2C_1e^{-4})(m^2+n^2).
\end{eqnarray*}
In other words, the constant $C_2$ appearing in equation (2.14) of Ratner's paper \cite{Rt} is 
\begin{equation}\label{e.C2-2-14}
C_2=\frac{2}{1-e^{-4}}\left(1+\frac{2}{e^2(1-e^{-4})}+\frac{2}{e^4(1-e^{-4})}\right),
\end{equation}
i.e., 
\begin{equation}\label{e.2-14-2}
|f_2(t)|\leq C_2(m^2+n^2)e^{-2t}
\end{equation}
with $C_2$ as above. 

Next, we observe that the constant $C_1$ in equation (2.16) of Ratner's paper \cite{Rt} is slightly different that what she refers to as $C_1$ in equation (2.14). Indeed, by denoting the roots of the characteristic equation $x^2+2x-4\lambda=0$ of the ODE satisfied by $y(t):=B_{n,m}(t)$ by $r_1:=r_1(\lambda) := -1+\sqrt{1+4\lambda}$ and $r_2 := r_2(\lambda) := -1 - \sqrt{1+4\lambda}$, the fact that $|f_1(t)|\leq C_1e^{-4t}\sqrt{m^2-4\lambda}$ implies that 
\begin{eqnarray}\label{e.C1-2-16}
\left|\int_t^{\infty}e^{-r_1 s}f_1(s)ds\right|&\leq& C_1\sqrt{m^2-4\lambda}\int_t^{\infty}e^{(-\textrm{Re}(r_1)-4)s} ds \nonumber\\
&\leq& \frac{C_1}{3}\sqrt{m^2-4\lambda}\cdot e^{-3t}
\end{eqnarray}
because $\textrm{Re}(r_1)+2\geq 1$. However, the constant $C_2$ in Ratner's paper \cite{Rt} is the same for both (2.16) and (2.14):
\begin{eqnarray}\label{e.C2-2-16}
\left|\int_t^{\infty}e^{-r_1 s}f_2(s)ds\right|&\leq& C_2(m^2+n^2)\int_t^{\infty}e^{(-\textrm{Re}(r_1)-2)s} ds\nonumber\\
&\leq& C_2(m^2+n^2)e^{-t}
\end{eqnarray}
because $\textrm{Re}(r_1)+2\geq 1$.

Concluding our series of preparations, we recall the definitions of the following two functions
$$A_1(t):=\int_1^t e^{(r_1-r_2)s}\left(\int_s^{\infty} e^{-r_1 u}f_1(u)\,du\right)\,ds$$
and 
$$A_2(t):=\int_1^t e^{(r_1-r_2)s}\left(\int_s^{\infty} e^{-r_1 u}f_2(u)\,du\right)\,ds$$
introduced after equation (2.18) of Ratner's paper \cite{Rt}. These functions appear naturally in our context because the ODE verified by $y(t)=B_{n,m}(t)$ can be rewritten as $(D-r_1)(D-r_2)y = f_1(t)+f_2(t):=f(t)$ where $D$ is the differentiation operator (with respect to $t$). Thus, since $(D-r_1)y=e^{r_1 t}D(e^{-r_1t}y)$, we have $e^{r_1t}D(e^{-r_1t}(D-r_2)y) = f(t)$ and, hence, 
$$e^{(r_2-r_1)t}D(e^{-r_2t}y)=-\int_t^{\infty}e^{-r_1 s}f(s)\,ds+P_1$$
where $P_1$ is a constant. In particular, we can write 
\begin{equation}\label{e.2-17-eq}
y(t) = -e^{-t}\int_1^t\left(\int_s^{\infty}e^u\,f(u)\,du\right)\,ds + P_1te^{-t} + P_2 e^{-t}
\end{equation}
if $r_1=r_2$, and
\begin{eqnarray}\label{e.2-17-neq}
y(t) &=& e^{r_2 t}A(t) + e^{r_2 t}\left[P_1\int_1^t e^{(r_1-r_2)s}\,ds + P_2\right] \nonumber \\ 
&=& e^{r_2 t}A(t) + \frac{P_1}{2\sqrt{1+4\lambda}}e^{r_1 t} + \left(P_2 - \frac{e^{2\sqrt{1+4\lambda}}P_1}{2\sqrt{1+4\lambda}}\right)e^{r_2 t}
\end{eqnarray} 
if $r_1\neq r_2$, where $A(t):=A_1(t)+A_2(t)$. Moreover, by using these equations, and the fact that $y(t) = B_{n,m}(t)\to 0$ as $t\to\infty$ (a consequence of the non-triviality 
of $T$, that is, it has no invariant $T$-invariant vectors), we can deduce that 
\begin{equation}\label{e.2-21-P1}
P_1 = \int_1^{\infty}e^{-r_1s} f(s)\,ds - r_2 e^{-r_1}y(1)+e^{-r_1}y'(1)
\end{equation}
and 
\begin{equation}\label{e.2-21-P2}
P_2 = y(1) e^{-r_2}
\end{equation}

Finally, from the estimates \eqref{e.2-14-1}, \eqref{e.2-14-2} above, and the facts $\textrm{Re}(r_1)-\textrm{Re}(r_2)\geq 0$ and $\textrm{Re}(r_1)+2\geq 1$, we can estimate:
\begin{eqnarray}\label{e.C1bar-2-19}
&&|e^{r_2t}A_1(t)| \\ &&= |e^{r_2t}\int_1^t e^{(r_1-r_2)s}\int_s^{\infty}e^{-r_1 u}f_1(u)du\,ds| \nonumber\\ 
&&\leq C_1\sqrt{m^2-4\lambda} \,\, e^{t\textrm{Re}(r_2)}\int_1^t e^{(\textrm{Re}(r_1)-\textrm{Re}(r_2))s}\int_s^{\infty}e^{(-\textrm{Re}(r_1)-4)u}du \, ds\nonumber\\
&&\leq\frac{C_1}{3}\sqrt{m^2-4\lambda} \,\, e^{t\textrm{Re}(r_2)} e^{(\textrm{Re}(r_1)-\textrm{Re}(r_2))t}\int_1^t e^{(-\textrm{Re}(r_1)-4)s}ds\nonumber\\
&&\leq \frac{C_1}{9e^3}\sqrt{m^2-4\lambda} \,\, e^{t\textrm{Re}(r_1)}\nonumber
\end{eqnarray}
and 
\begin{eqnarray}\label{e.C2bar-2-20}
&&|e^{r_2t}A_2(t)|\\ &&= |e^{r_2t}\int_1^t e^{(r_1-r_2)s}\int_s^{\infty}e^{-r_1 u}f_2(u)du\,ds| \nonumber\\ 
&&\leq C_2(m^2+n^2)e^{t\textrm{Re}(r_2)}\int_1^t e^{(\textrm{Re}(r_1)-\textrm{Re}(r_2))s}\int_s^{\infty} e^{(-\textrm{Re}(r_1)-2)u}du\,\,ds\nonumber\\
&&\leq C_2(m^2+n^2)e^{t\textrm{Re}(r_1)}\int_1^t e^{(-\textrm{Re}(r_1)-2)s}ds\nonumber\\
&&\leq \frac{C_2}{e}(m^2+n^2)e^{t\textrm{Re}(r_1)}\nonumber
\end{eqnarray}
Thus, we can take 
\begin{equation}\label{e.Cbar}
\bar{C}_1=C_1/9e^3 \quad \quad \textrm{and} \quad\quad \bar{C}_2=C_2/e
\end{equation}
in equations (2.19) and (2.20) of Ratner's paper \cite{Rt}.

After these preparations, we are ready to pass to the next section, where we render more explicitly the constants appearing in Lemma 2.2 of Ratner's paper \cite{Rt} about the speed of decay of the matrix coefficients $B_{n,m}(t)$ as $t\to\infty$.  

\section{Decay of matrix coefficients of $SL(2,\mathbb{R})$-representations}

By following closely the proof of Lemma 2.2 of Ratner's paper \cite{Rt}, we show the following explicit variant of it:
\begin{lemma}\label{l.lemma2-2}For $t\geq1$, $n,m\in\mathbb{Z}$, 
$$|B_{n,m}(t)|\leq (\bar{K}_{\lambda}(m^2+n^2)+\tilde{K}_{\lambda})\cdot b_{\lambda}(t),$$
where 
\begin{itemize}
\item $b_{\lambda}(t)=te^{-t}$ if $\lambda\leq-1/4$;
\item $b_{\lambda}(t)=te^{r_1t}$ if $-1/4<\lambda<0$;
\item $b_{\lambda}(t)=te^{-2t}$ if $0\leq\lambda$;
\end{itemize}
and
$$\bar{K}_\lambda=\left\{\begin{array}{cl} 4C_1/9e^3+2C_2/e+e & \textrm{if }\lambda\leq-1/4\\ 
4C_1/9e^3+2C_2/e+e & \textrm{if }-1/4<\lambda<0\\ 
(C_1+C_2)/2 & \textrm{if } 0\leq\lambda\end{array}\right.,$$
$$\tilde{K}_{\lambda}=\left\{\begin{array}{cl} (1+2\sqrt{2})e + (32+\sqrt{2})C_1^2/3e^3  & \textrm{if }\lambda\leq-1/4\\ 
3e + e^2 + 4C_1/9e^3 & \textrm{if }-1/4<\lambda<0\\ 
e^2& \textrm{if } 0\leq\lambda\end{array}\right..$$
with the constants $C_1$ and $C_2$ given by \eqref{e.C1-2-14} and \eqref{e.C2-2-14} above.
\end{lemma}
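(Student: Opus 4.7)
The plan is to imitate Ratner's proof of Lemma 2.2 in \cite{Rt}, splitting into three cases according to the sign of the discriminant $1+4\lambda$ of the characteristic polynomial $x^2+2x-4\lambda$ of the ODE for $y(t)=B_{n,m}(t)$. In each case, I would combine the explicit solution formulas \eqref{e.2-17-eq} or \eqref{e.2-17-neq} with the bounds \eqref{e.C1bar-2-19}--\eqref{e.C2bar-2-20} already proved in Section 3 for the particular solution $e^{r_2 t}A(t)$, and control the free constants $P_1$ and $P_2$ using the formulas \eqref{e.2-21-P1}--\eqref{e.2-21-P2} together with $|y(1)|\leq 1$, $|y'(1)|\leq\sqrt{m^2-4\lambda}$, and the tail bounds \eqref{e.C1-2-16}--\eqref{e.C2-2-16}. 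The $(m^2+n^2)$ dependence in $\bar K_\lambda$ comes entirely from $|f_2|$ (bound \eqref{e.2-14-2}) and the corresponding tail estimate \eqref{e.C2-2-16}, while the $m,n$-independent part collected in $\tilde K_\lambda$ absorbs the $\sqrt{m^2-4\lambda}$-contributions coming from $|f_1|$ and from the $|y'(1)|$ term in $P_1$.

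For $\lambda\geq 0$, the root $r_1=-1+\sqrt{1+4\lambda}\geq 0$ while $r_2\leq -2$. Since $T$ is non-trivial and irreducible, $B_{n,m}(t)\to 0$ as $t\to\infty$; applied to \eqref{e.2-17-neq}, this forces the coefficient of the non-decaying mode $e^{r_1 t}$ to vanish, i.e.\ $P_1=0$. Then $y(t)=e^{r_2 t}(A(t)+P_2)$ with $|e^{r_2 t}|\leq e^{-2t}$ and $|P_2|\leq e^{-r_2}\leq e^2$, and the bound follows directly from \eqref{e.C1bar-2-19}--\eqref{e.C2bar-2-20}. For $-1/4<\lambda<0$ both roots are real and lie in $(-2,0)$, so \eqref{e.2-17-neq} applies directly with real coefficients; $e^{r_2 t}A(t)$ is already controlled by $(\bar C_1\sqrt{m^2-4\lambda}+\bar C_2(m^2+n^2))e^{r_1 t}$, and the homogeneous part is handled together with the $\lambda\leq -1/4$ case via the regrouping described below.

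The main obstacle is the regime $\lambda\leq -1/4$. When $\lambda=-1/4$ the roots coincide at $-1$ and \eqref{e.2-17-eq} gives the $te^{-t}$ decay automatically, since the homogeneous solutions are $e^{-t}$ and $te^{-t}$. When $\lambda<-1/4$ the roots are complex conjugates $-1\pm i\sqrt{-1-4\lambda}$, and a naive application of \eqref{e.2-17-neq} produces the coefficient $1/(2\sqrt{1+4\lambda})$, which is singular as $\lambda\to -1/4^-$. To obtain a bound uniform in $\lambda$ (and explain the $\sqrt{2}$-factors visible in $\tilde K_\lambda$), I would regroup the homogeneous part of \eqref{e.2-17-neq} as
\begin{equation*}
\frac{P_1}{2\sqrt{1+4\lambda}}e^{r_1 t}+\Bigl(P_2-\frac{e^{2\sqrt{1+4\lambda}}P_1}{2\sqrt{1+4\lambda}}\Bigr)e^{r_2 t}=P_2\, e^{r_2 t}+P_1\, e^{-t+\sqrt{1+4\lambda}}\cdot\frac{\sinh\bigl(\sqrt{1+4\lambda}\,(t-1)\bigr)}{\sqrt{1+4\lambda}},
\end{equation*}
and then apply the elementary bound $|\sinh(z(t-1))/z|\leq (t-1)$ valid for purely imaginary $z$, together with $|e^{\sqrt{1+4\lambda}}|=1$. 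This converts the apparently singular coefficient into a benign factor of $t-1$, and the same identity (now with real $z\in(0,1)$) takes care of the $-1/4<\lambda<0$ case and of the $\lambda=-1/4$ limit. After this, estimating $|P_1|,|P_2|$ through \eqref{e.2-21-P1}--\eqref{e.2-21-P2} and summing the contributions is routine bookkeeping that produces the explicit constants $\bar K_\lambda,\tilde K_\lambda$ listed in the statement.
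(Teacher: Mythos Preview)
Your $\sinh$ regrouping neatly handles the singularity at $\lambda\to(-1/4)^-$, but you have overlooked the opposite end $\lambda\to-\infty$. In your bound for $|P_1|$ via \eqref{e.2-21-P1}, both $|r_2|=\sqrt{-4\lambda}$ and $|y'(1)|\leq\sqrt{m^2-4\lambda}$ blow up with $|\lambda|$; the same $\sqrt{m^2-4\lambda}$ appears in the bound \eqref{e.C1bar-2-19} on $e^{r_2 t}A_1(t)$. These quantities cannot be ``absorbed into $\tilde K_\lambda$'', because the lemma asserts that $\tilde K_\lambda$ equals the fixed number $(1+2\sqrt{2})e+(32+\sqrt{2})C_1^2/3e^3$ for every $\lambda\leq-1/4$. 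The paper deals with this by splitting the range $\lambda\leq-1/4$ further. For $-1/2\leq\lambda<-1/4$ one has $|r_2|\leq\sqrt{2}$ and $\sqrt{m^2-4\lambda}\leq m^2+n^2+\sqrt{2}$, and a direct estimate works. For $\lambda<-1/2$ the paper \emph{keeps} the division by $2\sqrt{1+4\lambda}$ (now harmless since $|1+4\lambda|>1$), which cancels the growth of $|r_2|$ and of $\sqrt{m^2-4\lambda}$ in $P_1$; and, crucially, it abandons \eqref{e.C1bar-2-19} for $A_1$ in favour of an integration-by-parts argument (introducing $J(u)=\int_u^\infty y'(v)/\sinh(2v)\,dv$ and the auxiliary functions $F_1,F_2$) that eliminates $\sqrt{m^2-4\lambda}$ from the $A_1$ bound altogether. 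That step is exactly where the constant $32C_1^2/3e^3$ in the stated $\tilde K_\lambda$ comes from; your outline has no mechanism that could produce a $C_1^2$.

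Two smaller points. For $\lambda\geq 0$, the bounds \eqref{e.C1bar-2-19}--\eqref{e.C2bar-2-20} carry $e^{t\,\mathrm{Re}(r_1)}$ on the right, and here $r_1\geq 0$, so they give no decay; one must redo the integrals using $r_2\leq-2$ (as the paper does) to reach $te^{-2t}$ and the constant $(C_1+C_2)/2$. Also, $e^{-r_2}\geq e^2$, not $\leq$; the correct observation is $|P_2\,e^{r_2 t}|=|y(1)|\,e^{r_2(t-1)}\leq e^{-2(t-1)}=e^2\,e^{-2t}$.
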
 

\begin{remark} In Ratner's article \cite{Rt}, the function $b_{\lambda}(t)$ is slightly different from the one above (when $\lambda<0$): indeed, in this paper, 
$$b_{\lambda}(t) = \left\{ \begin{array}{cl} \min\{te^{-t}, e^{-t}(1+\sqrt{1/|1+4\lambda|})\} & \textrm{if } \lambda\leq -1/4, \\ 
\min\{te^{r_1t},e^{r_1 t}\sqrt{1/|1+4\lambda|}\} & \textrm{if }-1/4<\lambda<0 \end{array} \right. .$$ In particular, this allows us to gain over the factor of $t$ (in front of the exponential functions $e^{-t}$, $e^{r_1t}$) when $\lambda$ is not close to $-1/4$ at the cost of permitting larger constants. However, since we had in mind the idea of getting uniform constants regardless of $\lambda$ and the factor of $t$ does not seem very substantial, we decided to neglect this issue by sticking to the function $b_{\lambda}(t)$ as defined in Lemma \ref{l.lemma2-2} above.  
\end{remark}

\begin{proof} We begin with the case $\lambda=-1/4$, i.e., $r_1=r_2=-1$. From \eqref{e.2-17-eq}, we know that 
$$y(t)=-e^{-t}\int_1^t\left(\int_s^{\infty}e^uf(u)du\right)ds+P_1te^{-t}+P_2e^{-t}.$$
Since, by definition, $f(t)=f_1(t)+f_2(t)$, we can apply \eqref{e.2-14-1}, \eqref{e.2-14-2} above to obtain
\begin{eqnarray}
|y(t)|&\leq& C_1\sqrt{m^2+1}\cdot e^{-t}\int_1^t\int_s^{\infty}e^{-3u}du\,\,ds \nonumber\\&+& C_2 (m^2+n^2)e^{-t}\int_1^t\int_s^{\infty}e^{-u}du\,\,ds \nonumber\\ &+&|P_1|te^{-t}+|P_2|e^{-t}.\nonumber
\end{eqnarray}
On the other hand, using that $|y(1)|\leq 1$, $|y'(1)|\leq\sqrt{m^2-4\lambda}$, the equations \eqref{e.C1-2-16}, \eqref{e.C2-2-16}, \eqref{e.2-21-P1}, \eqref{e.2-21-P2} above, 
and the fact that $r_1=r_2=-1$ in the present case, we get
\begin{eqnarray}
|y(t)| &\leq& \frac{C_1}{9e^3}\sqrt{m^2+1}\cdot e^{-t} + \frac{C_2}{e}(m^2+n^2)e^{-t} \nonumber \\
&+&te^{-t}\left[\frac{C_1}{3e^3}\sqrt{m^2+1}+\frac{C_2}{e}(m^2+n^2)+e+e\sqrt{m^2+1}\right] + e\cdot e^{-t}\nonumber
\end{eqnarray}
Since $\sqrt{m^2+1}\leq |m|+1\leq m^2+n^2+1$ and $e^{-t}\leq te^{-t}$ (because $t\geq 1$), we conclude that 
\begin{equation}\label{e.case1}
|y(t)|\leq te^{-t}\left[\bar{K}_{[\lambda=-1/4]}(m^2+n^2)+\tilde{K}_{[\lambda=-1/4]}\right]
\end{equation}
where 
$$\bar{K}_{[\lambda=-1/4]}=\frac{4C_1}{9e^3}+\frac{2C_2}{e}+e$$
and
$$\tilde{K}_{[\lambda=-1/4]}=\frac{4C_1}{9e^3}+3e$$

\bigskip

Next, we notice that, when $r_1\neq r_2$, by \eqref{e.2-17-neq}, and \eqref{e.C1bar-2-19}, \eqref{e.C2bar-2-20} above,  
\begin{eqnarray}\label{e.star}
|y(t)|&\leq& \frac{C_1}{9e^3}\sqrt{m^2-4\lambda}\cdot e^{t\textrm{Re}(r_1)}+ \frac{C_2}{e}(m^2+n^2)\cdot e^{t\textrm{Re}(r_1)} \\ 
&+& |P_1|\cdot te^{t\textrm{Re}(r_1)} + |P_2|\cdot e^{t\textrm{Re}(r_2)}\nonumber
\end{eqnarray}

If $-1/2\leq\lambda<-1/4$, we have that $\textrm{Re}(r_1)=\textrm{Re}(r_2)=-1$, $|r_2|\leq\sqrt{2}$ and $\sqrt{m^2-4\lambda}\leq\sqrt{m^2+2}\leq m^2+n^2+\sqrt{2}$, so that \eqref{e.2-21-P1}, \eqref{e.2-21-P2} and the equations \eqref{e.C1-2-16}, \eqref{e.C2-2-16} and \eqref{e.star} above imply
$$|P_1|\leq\frac{C_1}{3e^3}(m^2+n^2+\sqrt{2})+\frac{C_2}{e}(m^2+n^2)+\sqrt{2}\cdot e+e(m^2+n^2+\sqrt{2}),$$ 
$$|P_2|\leq e$$
and, \emph{a fortiori},
\begin{eqnarray}\label{e.case2}
|y(t)|&\leq&\left(\frac{C_1}{9e^3}+\frac{C_2}{e}\right)(m^2+n^2)te^{-t}+\frac{\sqrt{2}C_1}{9e^3}te^{-t}+(|P_1|+|P_2|)te^{-t}\nonumber\\
&\leq&te^{-t}\left(\bar{K}_{[-1/2\leq\lambda<-1/4]}(m^2+n^2)+\tilde{K}_{[-1/2\leq\lambda<-1/4]}\right)
\end{eqnarray}
where 
$$\bar{K}_{[-1/2\leq\lambda<-1/4]}=\frac{4C_1}{9e^3}+\frac{2C_2}{e}+e$$
and
$$\tilde{K}_{[-1/2\leq\lambda<-1/4]}=\frac{4\sqrt{2}C_1}{9e^3}+(1+2\sqrt{2})e.$$

\medskip

If $-1/4<\lambda<0$, we have that $0<\sqrt{1+4\lambda}<1$, so that $\textrm{Re}(r_1)=r_1=-1+\sqrt{1+4\lambda}\in(-1,0)$, $\textrm{Re}(r_2)=r_2=-1-\sqrt{1+4\lambda}\in (-2,-1)$, $|r_2|=1+\sqrt{1+4\lambda}\in(1,2)$ and $\sqrt{m^2-4\lambda}\leq\sqrt{m^2+1}\leq m^2+n^2+1$. Putting this into \eqref{e.2-21-P1}, \eqref{e.2-21-P2}, and the equations \eqref{e.C1-2-16}, \eqref{e.C2-2-16}, and \eqref{e.star} above, we get
$$|P_1|\leq\frac{C_1}{3e^3}(m^2+n^2+1)+\frac{C_2}{e}(m^2+n^2)+2e+e(m^2+n^2+1),$$
$$|P_2|\leq e^2,$$
and 
\begin{eqnarray}\label{e.case3}
|y(t)|&\leq& \left(\frac{C_1}{9e^3}+\frac{C_2}{e}\right)(m^2+n^2)te^{tr_1}+\frac{C_1}{9e^3}te^{tr_1}+(|P_1|+|P_2|)te^{tr_1}\nonumber\\
&\leq&te^{tr_1}(\bar{K}_{[-1/4<\lambda<0]}(m^2+n^2)+\tilde{K}_{[-1/4<\lambda<0]})
\end{eqnarray}
where 
$$\bar{K}_{[-1/4<\lambda<0]}=\frac{4C_1}{9e^3}+\frac{2C_2}{e}+e$$
and 
$$\tilde{K}_{[-1/4<\lambda<0]}=\frac{4C_1}{9e^3}+3e+e^2.$$

\bigskip

Now we pass to the case $\lambda<-1/2$. In this situation, $\sqrt{m^2-4\lambda}$ is not bounded, so we can't control $e^{r_2t}A_1(t)$ by using \eqref{e.C1bar-2-19}. So, we follow the arguments in page 281 of Ratner's paper \cite{Rt}. Recall that
$$A_1(t)=\int_1^t e^{(r_1-r_2)s}\left(\int_s^{\infty}e^{-r_1u} f_1(u)\,du\right)\,ds$$
and  
$$I(s):=\int_s^{\infty} e^{-r_1u}f_1(u) du=2\int_s^{\infty}e^{(-r_1-2)u}\frac{y'(u)}{\sinh(2u)}du.$$
Define $J(s):=\int_u^{\infty}y'(v)/\sinh(2v)\, dv$. By integration by parts, $J(u)=\frac{y(u)}{\sinh(2u)}+2\int_u^{\infty}y(v)\frac{\cosh(2v)}{\sinh^2(2v)}\,dv$,
$$I(s)=2\left[e^{(-r_1-2)s} J(s) + (r_1+2)\int_s^{\infty} e^{(-r_1-2)u} J(u)\, du\right]$$
and 
$A_1(t)=2(F_1(t)+F_2(t))$, where
$$F_1(t) = \int_1^t e^{(-r_2-2)s} J(s)\,ds$$
and
$$F_2(t) = (r_1+2)\int_1^t e^{(r_1-r_2)s}\left(\int_s^{\infty} e^{(-r_1-2)u}J(u)\,du\right)\,ds.$$

It follows that 
\begin{eqnarray}
|J(u)|&\leq&\frac{2}{(1-e^{-4})}e^{-2u} + 2\frac{(1+e^{-4})}{(1-e^{-4})}\int_u^{\infty}\frac{dv}{\sinh(2v)}\nonumber \\
&\leq& \frac{2}{(1-e^{-4})}e^{-2u} + 2\frac{(1+e^{-4})}{(1-e^{-4})^2}e^{-2u}\nonumber\\
&\leq& Q_1\cdot e^{-2u}\nonumber
\end{eqnarray}
where $Q_1=4/(1-e^{-4})^2=4C_1^2$. That is, we can take $Q_1=4C_1^2$ in the equation (2.22) of Ratner's paper \cite{Rt}. Also, since $\textrm{Re}(r_2)=-1$, we get
$$|e^{r_2t}F_1(t)|=\left|e^{r_2t}\int_1^t e^{-(r_2+2)s}J(s) ds\right|\leq Q_1e^{-t}\int_1^t e^{-3s}ds\leq Q_2e^{-t}$$
with $Q_2=4C_1^2/3e^3$, that is, this constant $Q_2$ works in equation (2.24) of Ratner's paper \cite{Rt}. Finally, by integrating by parts, 
$$e^{r_2t}F_2(t) = \frac{e^{r_2 t}(r_1+2)}{r_1-r_2}\left(\left[e^{(r_1-r_2)s}\int_s^{\infty} e^{(-r_1-2)u}J(u)\,du\right]_1^t + F_1(t)\right)$$
On the other hand, since $\lambda<-1/2$, one has $\left|\frac{r_1+2}{r_1-r_2}\right|=\left|\frac{1+\sqrt{1+4\lambda}}{2\sqrt{1+4\lambda}}\right|\leq 1$. By combining these facts, we see that 
$$|e^{r_2t}F_2(t)|\leq \frac{2Q_1}{3e^3}e^{-t}+Q_2e^{-t}=Q_3e^{-t}$$
where $Q_3=Q_1/e^3$. 

Thus, using these estimates to control $e^{r_2t}A_1(t)$ and the estimate~\eqref{e.C2bar-2-20} above to control $e^{r_2t}A_2(t)$, we obtain
\begin{eqnarray}
|e^{r_2t}A(t)|&\leq& |e^{r_2t}A_1(t)|+|e^{r_2t}A_2(t)|\nonumber\\ 
&\leq& 2|e^{r_2t}F_1(t)|+2|e^{r_2t}F_2(t)|+|e^{r_2t}A_2(t)|\nonumber\\
&\leq& 2(Q_2+Q_3)e^{-t}+\frac{C_2}{e}(m^2+n^2)e^{-t}\nonumber\\
&=& (\tilde{Q}+\bar{Q}(m^2+n^2))e^{-t}\nonumber
\end{eqnarray}
where $\tilde{Q}=2(Q_2+Q_3)=32C_1^2/3e^3$ and $\bar{Q}=\bar{C}_2=C_2/e$.

The second step in the analysis for the case $\lambda<-1/2$ is the control of the quantities $|P_1/2\sqrt{1+4\lambda}|$ and $|P_2-e^{2\sqrt{1+4\lambda}}P_1/2\sqrt{1+4\lambda}|$. Since $r_1=-1+i\sqrt{|1+4\lambda|}$, $r_2=-1-i\sqrt{|1+4\lambda|}$ and $|y(1)|\leq 1$, $|y'(1)|\leq\sqrt{m^2-4\lambda}$, we can estimate the first quantity as follows:
\begin{eqnarray}
\left|\frac{P_1}{2\sqrt{1+4\lambda}}\right|&\leq& \frac{1}{2\sqrt{|1+4\lambda}|}\left(\left|\int_1^{\infty}e^{-r_1s}f_1(s)ds\right| + \left|\int_1^{\infty}e^{-r_1s}f_2(s)ds\right|\right) \nonumber\\
&+&\frac{1}{2\sqrt{|1+4\lambda|}}\left(|r_2e^{-r_1}y(1)|+|e^{-r_1}y'(1)|\right)\nonumber\\
&\leq& \frac{1}{2\sqrt{|1+4\lambda}|}\left(\frac{C_1}{3e^3}\sqrt{m^2-4\lambda}+\frac{C_2}{e}(m^2+n^2)\right)\nonumber\\
&+& \frac{1}{2\sqrt{|1+4\lambda}|}\left(\sqrt{1+|1+4\lambda|}\cdot e+\sqrt{m^2-4\lambda}\cdot e\right)\nonumber\\
&=& \frac{1}{2\sqrt{|1+4\lambda}|}\left(\frac{C_1}{3e^3}\sqrt{m^2+|1+4\lambda|+1}+\frac{C_2}{e}(m^2+n^2)\right)\nonumber\\
&+& \frac{e}{2\sqrt{|1+4\lambda}|}\left(\sqrt{1+|1+4\lambda|}+\sqrt{m^2+|1+4\lambda|+1}\right)\nonumber\\
&\leq& \frac{1}{2\sqrt{|1+4\lambda}|}\left(\frac{C_1}{3e^3}+\frac{C_2}{e}+e\right)(m^2+n^2)\nonumber\\
&+& \frac{\sqrt{1+|1+4\lambda|}}{2\sqrt{|1+4\lambda}|}\left(\frac{C_1}{3e^3}+2e\right)\nonumber\\
&\leq& \frac{1}{2}\left(\frac{C_1}{3e^3}+\frac{C_2}{e}+e\right)(m^2+n^2)+\frac{1}{\sqrt{2}}\left(\frac{C_1}{3e^3}+2e\right).\nonumber
\end{eqnarray}
\smallskip
Here, we used that $\lambda<-1/2$ (so that $|1+4\lambda|>1$) and $\sqrt{(1+x)/x}<\sqrt{2}$ whenever $x>1$. Similarly, we can estimate the second quantity as follows:
$$
\left|P_2-\frac{e^{2\sqrt{1+4\lambda}}}{2\sqrt{1+4\lambda}}P_1\right| \leq |P_2|+\left|\frac{P_1}{2\sqrt{1+4\lambda}}\right|\leq \bar{Q}_1(m^2+n^2)+\tilde{Q}_1,
$$
where $\bar{Q}_1=\frac{1}{2}\left(\frac{C_1}{3e^3}+\frac{C_2}{e}+e\right)$ and $\tilde{Q}_1=\frac{1}{\sqrt{2}}\left(\frac{C_1}{3e^3}+2e\right)+e$. 
\medskip
Inserting these estimates above into \eqref{e.2-17-neq}, we deduce that 
\begin{eqnarray}\label{e.case4}
|y(t)|&\leq& |e^{r_2t}A(t)|+\left|\frac{P_1}{2\sqrt{1+4\lambda}}\right|\cdot |e^{r_1t}|+\left|P_2-\frac{e^{2\sqrt{1+4\lambda}}}{2\sqrt{1+4\lambda}}P_1\right|\cdot |e^{r_2t}|\nonumber\\
&\leq& \left((\bar{Q}+2\bar{Q}_1)(m^2+n^2)+(\tilde{Q}+2\tilde{Q}_1-e)\right)e^{-t}\nonumber\\
&=& \left(\bar{K}_{[\lambda<-1/2]}(m^2+n^2)+\tilde{K}_{[\lambda<-1/2]})\right)e^{-t}
\end{eqnarray}
where 
$$\bar{K}_{[\lambda<-1/2]}=\frac{C_1}{3e^3}+\frac{2C_2}{e}+e$$
and 
$$\tilde{K}_{[\lambda<-1/2]}=\frac{(32+\sqrt{2})C_1^2}{3e^3}+(1+2\sqrt{2})e.$$

\bigskip

Finally, we consider the case $\lambda\geq0$. We begin by estimating $|e^{r_2t}A_2(t)|$ and $e^{r_2t}A_1(t)$: using \eqref{e.2-14-1}, \eqref{e.2-14-2} above and $r_1=-1+\sqrt{1+4\lambda}\geq0$, $r_2=-1-\sqrt{1+4\lambda}\leq-2$, we obtain
\begin{eqnarray}
|e^{r_2t}A_2(t)|&\leq&C_2(m^2+n^2)e^{r_2t}\int_1^t e^{(r_1-r_2)s}\int_s^{\infty}e^{-r_1u}e^{-2u}du\,\,ds\nonumber\\
&\leq&\frac{C_2}{2}(m^2+n^2)e^{r_2t}\int_1^t e^{(-r_2-2)s}ds\nonumber\\
&\leq& \frac{C_2}{2}(m^2+n^2)te^{-2t}\nonumber
\end{eqnarray}
and
\begin{eqnarray*}
|e^{r_2t}A_1(t)|&\leq& \frac{C_1}{2}\sqrt{m^2-4\lambda}e^{r_2t}\int_1^t e^{-(r_2+2)s}ds\leq\frac{C_1}{2}|m| t e^{-2t}\\
&\leq&\frac{C_1}{2}(m^2+n^2)te^{-2t}.
\end{eqnarray*}
Thus, 
$$|e^{r_2t}A(t)|\leq |e^{r_2t}A_1(t)|+|e^{r_2t}A_2(t)|\leq\frac{C_1+C_2}{2}(m^2+n^2)te^{-2t},$$
so that we can take $\bar{C}=(C_1+C_2)/2$ and $\tilde{C}=0$ in the equation (2.28) of Ratner's paper \cite{Rt}. 

Next, we observe that $y(t)\to0$ when $t\to\infty$ and $r_1\geq 0$ imply $P_1=0$ and 
$$y(t)=e^{r_2t}A(t)+y(1)e^{-r_2}e^{r_2t}.$$

Therefore, from the previous discussion and $r_2+2\leq0$, it follows that 
\begin{eqnarray}\label{e.case5}
|y(t)|&\leq&\frac{C_1+C_2}{2}(m^2+n^2)te^{-2t}+e^{-r_2}e^{(r_2+2)t}e^{-2t}\nonumber\\
&\leq& (\bar{K}_{[\lambda\geq0]}(m^2+n^2)+\tilde{K}_{[\lambda\geq0]})te^{-2t}
\end{eqnarray}
where 
$$\bar{K}_{[\lambda\geq0]}=\frac{C_1+C_2}{2}$$
and
$$\tilde{K}_{[\lambda\geq0]}=e^2$$

\bigskip

At this stage, from \eqref{e.case1}, \eqref{e.case2}, \eqref{e.case3}, \eqref{e.case4}, \eqref{e.case5} above, we see that the proof of the desired lemma is complete.
\end{proof}

In next (and final) section, we apply Lemma \ref{l.lemma2-2} to derive explicit variants of Theorems 1 and 3 of Ratner's paper \cite{Rt}. To do so, we recall some notation already introduced in Section \ref{s.preliminaries}. We denote by 
$r(\theta)=\left(\begin{array}{cc}\cos\theta & \sin\theta \\ -\sin\theta & \cos\theta\end{array}\right)\in SO(2,\mathbb{R})$, $\theta\in\mathbb{R}$. Given an unitary $SL(2,\mathbb{R})$-representation $T$, we denote by $K(T,3)$ the set of vectors $v\in\mathcal{H}(T)$ such that $\theta\mapsto T(r(\theta))v$ is $C^3$. Finally, if the map $\theta\mapsto T(r(\theta))v$ is $C^1$, we denote by 
$$L_Wv := \lim\limits_{\theta\to 0}\frac{T(r(\theta))v-v}{\theta}$$
the Lie derivative of $v$ along the direction of $W=\left(\begin{array}{cc}0 & 1 \\ -1 & 0\end{array}\right)$ of the infinitesimal generator of the rotation group $SO(2,\mathbb{R})=\{r(\theta):\theta\in\mathbb{R}\}$.

In particular, in the case of an irreducible unitary $SL(2,\mathbb{R})$-representation $T$, since $T(r(\theta))\phi_n=e^{in\theta}\phi_n$ when $\phi_n\in\mathcal{H}_n(T)$, we have that 
$$L_W\phi_n = i n\phi_n$$ 
for every $n\in\mathbb{Z}$. 

\section{Proof of Theorems \ref{t.theorem1} and \ref{t.theorem3}}

In this short section, we indicate how Lemma \ref{l.lemma2-2} can be used to prove Theorems \ref{t.theorem1} and \ref{t.theorem3} (whose respective statements are recalled below).

\begin{theorem}\label{t.theorem1'}Let $T$ be a non-trivial irreducible unitary representation of $SL(2,\mathbb{R})$ in $\mathcal{H}(T)$ and let $\lambda=\lambda(T)$. Let $v,w\in K(T,3)$ and $B(t)=\langle v,w\circ a(t)\rangle$. Then, for all $t\geq 1$, 
\begin{eqnarray*}
|B(t)|&\leq&\sqrt{2\zeta(2)}\cdot\bar{K}_{\lambda}\cdot\|L_W^3v\|\cdot(\|w\|+\sqrt{2\zeta(6)}\|L_W^3w\|)\cdot b_{\lambda}(t) \\
&+&\sqrt{2\zeta(2)}\cdot\bar{K}_{\lambda}\cdot(\|v\|+\sqrt{2\zeta(6)}\|L_W^3v\|)\cdot\|L_W^3w\|\cdot b_{\lambda}(t)\\
&+&\tilde{K}_{\lambda}\cdot(\|v\|+\sqrt{2\zeta(6)}\|L_W^3v\|)\cdot(\|w\|+\sqrt{2\zeta(6)}\|L_W^3w\|)\cdot b_{\lambda}(t)
\end{eqnarray*}
where $\bar{K}_{\lambda}$, $\tilde{K}_{\lambda}$ and $b_{\lambda}(t)$ are as in Lemma~\ref{l.lemma2-2}. 
\end{theorem}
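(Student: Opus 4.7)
The plan is to reduce Theorem~\ref{t.theorem1'} to Lemma~\ref{l.lemma2-2} via the Peter--Weyl-type decomposition of $\mathcal{H}(T)$ into weight spaces for the maximal compact subgroup $SO(2,\mathbb{R})$, and then to convert the resulting $\ell^1$-type sums in the Fourier coefficients into the $L^2$-type quantities $\|v\|$, $\|L_W^3 v\|$ (and likewise for $w$) by Cauchy--Schwarz.

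First I would fix the orthonormal basis $\{\phi_n\}_{n\in\mathbb{Z}}$ with $\phi_n\in\mathcal{H}_n(T)$ introduced at the beginning of Section~3 and write the Fourier-like expansions
\[
v=\sum_{n\in\mathbb{Z}} v_n\phi_n, \qquad w=\sum_{m\in\mathbb{Z}} w_m\phi_m,
\]
so that $B(t)=\langle v,T(a(t))w\rangle=\sum_{n,m}\overline{v_n}\,w_m\,B_{n,m}(t)$, which converges because $v,w\in K(T,3)\subset\mathcal{H}(T)$. Since $T(r(\theta))\phi_n=e^{in\theta}\phi_n$, the identity $L_W\phi_n=in\phi_n$ noted at the end of Section~4 yields $L_W^3 v=-i\sum_n n^3 v_n\phi_n$, and by Parseval $\|L_W^3v\|^2=\sum_n n^6|v_n|^2$ (analogously for $w$).

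Next I would apply Lemma~\ref{l.lemma2-2} termwise to get
\[
|B(t)|\le b_\lambda(t)\sum_{n,m}|v_n||w_m|\bigl(\bar K_\lambda(m^2+n^2)+\tilde K_\lambda\bigr),
\]
and split the right-hand side into three sums:
\[
\bar K_\lambda\Bigl(\sum_n n^2|v_n|\Bigr)\Bigl(\sum_m |w_m|\Bigr)
+\bar K_\lambda\Bigl(\sum_n |v_n|\Bigr)\Bigl(\sum_m m^2|w_m|\Bigr)
+\tilde K_\lambda\Bigl(\sum_n |v_n|\Bigr)\Bigl(\sum_m |w_m|\Bigr).
\]
The key estimates are then the Cauchy--Schwarz bounds
\[
\sum_n n^2|v_n|=\sum_{n\neq 0}|n|^{-1}\cdot |n|^3|v_n|\le \Bigl(\sum_{n\neq 0}n^{-2}\Bigr)^{1/2}\|L_W^3 v\|=\sqrt{2\zeta(2)}\,\|L_W^3 v\|
\]
and, separating the $n=0$ term and using $|v_0|\le\|v\|$,
\[
\sum_n |v_n|\le \|v\|+\sum_{n\neq 0}|n|^{-3}\cdot |n|^3|v_n|\le \|v\|+\sqrt{2\zeta(6)}\,\|L_W^3 v\|,
\]
and the two analogous bounds for $w$. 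Substituting these into the three-term split yields exactly the three summands claimed in Theorem~\ref{t.theorem1'}.

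I do not anticipate a serious obstacle: the only subtle point is handling the $n=0$ mode (where the weight $|n|^{-k}$ blows up) by bounding $|v_0|$ directly by $\|v\|$, which is exactly what produces the asymmetric combination $\|v\|+\sqrt{2\zeta(6)}\|L_W^3 v\|$ in the statement; everything else is the linearity of the pairing, Lemma~\ref{l.lemma2-2}, and Cauchy--Schwarz. The convergence of all the relevant sums is guaranteed by $v,w\in K(T,3)$, which makes $\sum_n n^6|v_n|^2<\infty$.
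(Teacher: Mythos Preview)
Your proposal is correct and follows essentially the same route as the paper's own proof: expand $v$ and $w$ in the $SO(2,\mathbb{R})$-weight basis $\{\phi_n\}$, apply Lemma~\ref{l.lemma2-2} termwise, split the resulting double sum into the three pieces corresponding to $n^2$, $m^2$, and the constant, and then bound each factor by Cauchy--Schwarz against $\sum_{n\neq 0}n^{-2}=2\zeta(2)$ or $\sum_{n\neq 0}n^{-6}=2\zeta(6)$, handling the $n=0$ term separately by $|v_0|\le\|v\|$. The only cosmetic difference is notation ($v_n,w_m$ versus the paper's $c_n,d_n$) and your explicit mention of the complex conjugate in the bilinear expansion, which is immaterial once absolute values are taken.
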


\begin{proof} Following the proof of Theorem 1 of Ratner's paper \cite{Rt} (at page 283), we write 
$$v=\sum\limits_{n\in\mathbb{Z}}c_n\phi_n, \quad w=\sum\limits_{n\in\mathbb{Z}}d_n\phi_n$$
with $c_n=\langle v,\phi_n\rangle$, $d_n=\langle w,\phi_n\rangle$ (and $\phi_n\in\mathcal{H}_n(T)$,  $n\in\mathbb{Z}$) as in page 276 of this paper. We have 
$$B(t)=\sum\limits_{n,m\in\mathbb{Z}} c_n d_m B_{n,m}(t)$$
so that 
$$|B(t)|\leq b_{\lambda}(t)\sum\limits_{n,m\in\mathbb{Z}}|c_n|\cdot|d_m|\cdot(\bar{K}_{\lambda}(m^2+n^2)+\tilde{K}_{\lambda})$$
by Lemma~\ref{l.lemma2-2}.

On the other hand, since $L_W^3\phi_n = -i n^3\phi_n$ for all $n\in\mathbb{Z}$, we know that 
\begin{eqnarray*}
\sum\limits_{n\in\mathbb{Z}}|c_n|&\leq& |c_0|+\left(\sum\limits_{n\in\mathbb{Z}-\{0\}}n^6|c_n|^2\right)^{\frac{1}{2}}\left(\sum\limits_{n\in\mathbb{Z}-\{0\}}\frac{1}{n^6}\right)^{\frac{1}{2}}\\ &\leq&\|v\| + \sqrt{2\zeta(6)}\cdot\|L_W^3v\|,
\end{eqnarray*}
\begin{eqnarray*}
\sum\limits_{m\in\mathbb{Z}}|d_m|&\leq& |d_0|+\left(\sum\limits_{m\in\mathbb{Z}-\{0\}}m^6|d_m|^2\right)^{\frac{1}{2}}\left(\sum\limits_{m\in\mathbb{Z}-\{0\}}\frac{1}{m^6}\right)^{\frac{1}{2}}\\ &\leq&\|w\| + \sqrt{2\zeta(6)}\cdot\|L_W^3w\|,
\end{eqnarray*}
\begin{eqnarray*}
\sum\limits_{n\in\mathbb{Z}}|c_n|\cdot n^2&\leq& \left(\sum\limits_{n\in\mathbb{Z}-\{0\}}n^6|c_n|^2\right)^{\frac{1}{2}}\left(\sum\limits_{n\in\mathbb{Z}-\{0\}}\frac{1}{n^2}\right)^{\frac{1}{2}}\\ &\leq&\sqrt{2\zeta(2)}\cdot\|L_W^3v\|,
\end{eqnarray*}
and
\begin{eqnarray*}
\sum\limits_{m\in\mathbb{Z}}|d_m|\cdot m^2&\leq& \left(\sum\limits_{m\in\mathbb{Z}-\{0\}}m^6|d_m|^2\right)^{\frac{1}{2}}\left(\sum\limits_{m\in\mathbb{Z}-\{0\}}\frac{1}{m^2}\right)^{\frac{1}{2}}\\ &\leq&\sqrt{2\zeta(2)}\cdot\|L_W^3w\|,
\end{eqnarray*}
The desired result follows.
\end{proof}

\begin{theorem}\label{t.theorem3'}Let $T$ be an unitary representation of $SL(2,\mathbb{R})$ having no non-zero invariant vectors in $\mathcal{H}(T)$. Write $\Lambda=\Lambda(\Omega_T)$ the spectrum of the Casimir operator and
$$A(T)=\Lambda\cap(-1/4,0).$$
If $A(T)\neq\emptyset$, let $\beta(T)=\sup A(T)$ and $\sigma(T)=-1+\sqrt{1+4\beta(T)}$. Assume that $\beta(T)<0$ when $A(T)\neq\emptyset$. Let $B(t)=\langle v,w\circ a(t)\rangle$ with $v,w\in K(T,3)$. Then, for all $t\geq 1$,
\begin{eqnarray*}
|B(t)|&\leq&\sqrt{2\zeta(2)}\cdot\bar{K}\cdot\|L_W^3v\|\cdot(\|w\|+\sqrt{2\zeta(6)}\|L_W^3w\|)\cdot b_T(t) \\
&+&\sqrt{2\zeta(2)}\cdot\bar{K}\cdot(\|v\|+\sqrt{2\zeta(6)}\|L_W^3v\|)\cdot\|L_W^3w\|\cdot b_T(t)\\
&+&\tilde{K}_T\cdot(\|v\|+\sqrt{2\zeta(6)}\|L_W^3v\|)\cdot(\|w\|+\sqrt{2\zeta(6)}\|L_W^3w\|)\cdot b_T(t)
\end{eqnarray*}
where $\bar{K}=\frac{4C_1}{9e^3}+\frac{2C_2}{e}+e$, 
$$\tilde{K}_T=\left\{\begin{array}{cl}\frac{(32+\sqrt{2})C_1^2}{3e^3}+(1+2\sqrt{2})e & \textrm{ if } A(T)=\emptyset, \\ 
\frac{4C_1}{9e^3}+3e+e^2 & \textrm{ if } A(T)\neq\emptyset\end{array}\right.,$$
$$b_T(t)=\left\{\begin{array}{cl}t\cdot e^{-t} & \textrm{ if } A(T)=\emptyset, \\ 
t\cdot e^{\sigma(T)t} & \textrm{ if } A(T)\neq\emptyset\end{array}\right.$$
\end{theorem}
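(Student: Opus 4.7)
The plan is to deduce the theorem from the irreducible case (Theorem \ref{t.theorem1'}) via the direct integral decomposition of $T$ with respect to the Casimir operator $\Omega_T$. Since $\Omega_T$ is self-adjoint and commutes with $T(g)$ for every $g \in SL(2,\mathbb{R})$, the spectral theorem produces decompositions $\mathcal{H}(T) = \int_\Lambda^\oplus \mathcal{H}_\lambda\, d\mu(\lambda)$ and $T = \int_\Lambda^\oplus T_\lambda\, d\mu(\lambda)$, where each fiber $T_\lambda$ has Casimir acting as $\lambda \cdot \mathrm{Id}$ and splits further into irreducible subrepresentations with parameter $\lambda$, each non-trivial since $T$ has no non-zero invariant vectors. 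The vectors $v, w \in K(T,3)$ decompose as $v = \int^\oplus v_\lambda\, d\mu$ and $w = \int^\oplus w_\lambda\, d\mu$; because $[L_W, \Omega_T] = 0$ on $C^3$-vectors, $L_W$ preserves the spectral projections of $\Omega_T$, so $L_W^3$ acts fiberwise and one obtains Parseval identities $\|v\|^2 = \int \|v_\lambda\|^2\, d\mu$ and $\|L_W^3 v\|^2 = \int \|L_W^3 v_\lambda\|^2\, d\mu$ (and similarly for $w$).

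Applying Theorem \ref{t.theorem1'} to each irreducible summand of each fiber yields a bound of the desired shape for $B_\lambda(t) := \langle v_\lambda, T_\lambda(a(t)) w_\lambda\rangle$, with $\lambda$-dependent quantities $\bar K_\lambda, \tilde K_\lambda, b_\lambda(t)$. Since $B(t) = \int_\Lambda B_\lambda(t)\, d\mu(\lambda)$, the proof reduces to combining this fiberwise estimate with uniform bounds $b_\lambda(t) \leq b_T(t)$, $\bar K_\lambda \leq \bar K$, and $\tilde K_\lambda \leq \tilde K_T$ for every $\lambda \in \Lambda$ and $t \geq 1$, and then applying Cauchy--Schwarz together with the Parseval identities, e.g.
$$\int_\Lambda \|L_W^3 v_\lambda\| \cdot \bigl(\|w_\lambda\| + \sqrt{2\zeta(6)}\|L_W^3 w_\lambda\|\bigr)\, d\mu \leq \|L_W^3 v\| \cdot \bigl(\|w\| + \sqrt{2\zeta(6)}\|L_W^3 w\|\bigr),$$
and analogously for the two remaining terms appearing in Theorem \ref{t.theorem1'}.

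The main piece of work is verifying the uniform inequality $b_\lambda(t) \leq b_T(t)$, which I expect to be the only nontrivial obstacle. When $A(T) = \emptyset$ the spectrum $\Lambda$ avoids $(-1/4, 0)$, so $b_\lambda(t) \in \{t e^{-t}, t e^{-2t}\}$, both $\leq t e^{-t} = b_T(t)$. When $A(T) \neq \emptyset$ the hypothesis $\beta(T) < 0$ gives $\sigma(T) = -1 + \sqrt{1 + 4\beta(T)} \in (-1, 0)$; using monotonicity of $r_1(\lambda) = -1 + \sqrt{1 + 4\lambda}$ in $\lambda$ and the inequality $\lambda \leq \beta(T)$ for $\lambda \in A(T)$, one checks case by case ($\lambda \leq -1/4$, $-1/4 < \lambda \leq \beta(T)$, $\lambda \geq 0$) that $b_\lambda(t) \leq t e^{\sigma(T) t} = b_T(t)$ for $t \geq 1$. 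The inequalities $\bar K_\lambda \leq \bar K$ and $\tilde K_\lambda \leq \tilde K_T$ reduce to direct numerical comparison of the piecewise values in Lemma \ref{l.lemma2-2} (with a small verification only in the $\lambda \geq 0$ case, where $(C_1+C_2)/2 \leq 4C_1/9e^3 + 2C_2/e + e$ and $e^2$ is dominated by both options for $\tilde K_T$). Assembling these ingredients produces the stated inequality.
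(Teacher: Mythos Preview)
Your proposal is correct and follows essentially the same route as the paper, which simply invokes Theorem~\ref{t.theorem1'} together with Ratner's direct-integral argument from pages 285--286 of \cite{Rt}. Your sketch (spectral decomposition with respect to $\Omega_T$, fiberwise application of the irreducible estimate, uniform domination of $b_\lambda(t)$, $\bar K_\lambda$, $\tilde K_\lambda$ by $b_T(t)$, $\bar K$, $\tilde K_T$, and Cauchy--Schwarz/Parseval to reassemble) is exactly that argument spelled out, and the numerical verifications you flag all go through.
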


\begin{proof} This is an immediate consequence of Theorem \ref{t.theorem1'} and the arguments from pages 285--286 of Ratner's paper \cite{Rt}.
\end{proof}



\end{document}